\newcommand{\beq}{\begin{eqnarray*}}
\newcommand{\feq}{\end{eqnarray*}}
\newcommand{\beqn}{\begin{eqnarray}}
\newcommand{\feqn}{\end{eqnarray}}
\newtheorem{theorem}{Theorem}[section]
\newtheorem{lemma}[theorem]{Lemma}
\theoremstyle{definition}
\theoremstyle{remark}
\numberwithin{equation}{section}
\begin{document}
\title[EP zero background]{On the Riccati dynamics of the Euler-Poisson equations with zero background state}
%{Finite time blow up conditions for two-dimensional weakly restricted Euler-Poisson equations}
%{A CONSERVATION LAW WITH NONLOCAL FLUX AND ITS APPLICATIONS}
%    Information for first author
\author{Yongki Lee}
%    Address of record for the research reported here
\address{Department of Mathematical Sciences, Georgia Southern University, Statesboro,  30458}
\email{yongkilee@georgiasouthern.edu}
%    \thanks will become a 1st page footnote.
%\thanks{The first author was supported in part by NSF Grant \#000000.}
\keywords{Critical thresholds, Euler-Poisson equations}
\subjclass{Primary, 35Q35; Secondary, 35B30}
\begin{abstract} 
This paper studies the two-dimensional Euler-Poisson equations associated with either attractive or repulsive forces. We mainly study the Riccati system that governs the 
flow's gradient. Under a suitable condition, it is shown that the Euler-Poisson system admits global smooth solutions for a large set of initial configurations. This paper is a continuation of our former work \cite{Lee20}.
% and is devoted to the study of 
\end{abstract}
\maketitle

%admits global smooth solutions for a large set of initial
%configurations, so called sub-critical conditions which are not necessarily confined to
%any prefered small neighborhood.

\section{Introduction}

We are concerned with the threshold phenomenon in two-dimensional Euler-Poisson (EP) equations. The pressureless Euler-Poisson equations in multi-dimensions are
\begin{equation}\label{101}
\left\{
  \begin{array}{ll}
        \rho_t + \nabla \cdot (\rho \mathbf{u})=0, \\
\mathbf{u}_t + \mathbf{u} \cdot \nabla \mathbf{u} = k \nabla \Delta^{-1} (\rho-c_b),\\
  \end{array}
\right.
\end{equation}
%\begin{subequations} \label{101}
   % \begin{equation} \label{101a}
      %  \rho_t + \nabla \cdot (\rho \mathbf{u})=0,
    %\end{equation}
    %\begin{equation} \label{101b}
      %  \mathbf{u}_t + \mathbf{u} \cdot \nabla \mathbf{u} = k \nabla \Delta^{-1} (\rho-c_b),
    %\end{equation}
%\end{subequations}
which are the usual statements of the conservation of mass and Newton's second law. Here $k$ is a physical constant which parameterizes the repulsive $k>0$ or attractive $k<0$ forcing, governed by the Poisson potential $\Delta^{-1}(\rho -c_b)$ with constant $c_b\geq 0$ which denotes background state.
  The density $\rho=\rho(t,x)$ : $\mathbb{R}^+ \times \mathbb{R}^n \mapsto \mathbb{R^+} $  and the velocity field $\mathbf{u}(t,x)$ : $\mathbb{R}^+ \times \mathbb{R}^n \mapsto \mathbb{R}^{n}$  are the unknowns. This hyperbolic system \eqref{101} with non-local forcing describes the dynamic behavior of many important physical flows, including  plasma with collision, cosmological waves, charge transport,  and the collapse of stars due to self gravitation \cite{BRR94, DLYZ02, HJL81, Ma86, MRS90}.

This paper continues to investigate the Riccati dynamics of the Euler-Poisson equations in the  preceding manuscript \cite{Lee20}, which studied  a sub-threshold conditions for  global smooth solutions. In the preceding paper,  we studied threshold conditions for global existence of solutions to the EP equations with attractive forcing and non-zero background state case.
% was discussed
%In the preceding paper, attractive forcing with non-zero background state case was discussed. 

The present work is devoted to study both attractive and repulsive force cases with zero background state. More precisely, the goal of this paper is showing that, under a suitable condition, two-dimensional Euler-Poisson system with zero background (i.e., $c_b=0$) can afford to have global smooth solutions for a large set of initial configurations. It is interesting to note that the sub-threshold condition for global existence we find in this work holds for both attractive and repulsive forcing cases. 

%In section \ref{section 2}, we seek the evolution of $\nabla \mathbf{u}$ and derive a closed ordinary differential equations (ODE) system which is nonlinear and nonlocal, and relate/review many previous works with the derived ODE system. In section 3, we discuss the motivation and highlights of the present work. In addition to this, we state our main results about global solutions to the EP system. The details of the proofs of those main results are carried out in Sections 4 and 5.

\section{Problem formulation and main result}\label{section 2}
We consider two-dimensional Euler-Poisson equations with either attractive ($k<0$) or repulsive ($k>0$) forces under the zero background state ($c_b=0$). In \cite{Lee20}, the author studied EP system with attractive forcing under the non-zero background case. 

We are mainly concerned with a Riccati equation that governs $\mathcal{M}:=\nabla \mathbf{u}$, and it is obtained by differentiating the second equation of \eqref{101}:
\begin{equation}\label{matrix_eqn}
\partial_t \mathcal{M} + \mathbf{u} \cdot \nabla \mathcal{M} + \mathcal{M}^2 = k \nabla \otimes \nabla \Delta^{-1} \rho  = k R[\rho -c_b],
\end{equation}
where $R[\cdot] $ is the $2 \times 2$ Riesz matrix operator, defined as
$$R[h]:=\nabla \otimes \nabla \Delta^{-1}[h]=\mathcal{F}^{-1}\bigg{\{} \frac{\xi_i \xi_j}{|\xi|^2} \hat{h}(\xi)  \bigg{\}}_{i,j=1,2} .$$

The problem formulation is similar to the ones in \cite{Y16, Lee20}, so we omit several details, but briefly discuss it here. 

There are several quantities that characterize the dynamics of $\mathcal{M}$.  These are the trace $d:=\mathrm{tr} \mathcal{M} = \nabla \cdot \mathbf{u}$, the vorticity $\omega : = \nabla \times \mathbf{u} = \mathcal{M}_{21} - \mathcal{M}_{12}$ and quantities $\eta:= \mathcal{M}_{11} - \mathcal{M}_{22}$ and $\xi := \mathcal{M}_{12} + \mathcal{M}_{21}$. The trace is governed by the Riccati equation:
\begin{equation}\label{riccati_d}
d'=-\frac{1}{2}d^2 -\frac{1}{2}\eta^2 + \frac{1}{2}\omega^2 -\frac{1}{2}\xi^2 + k(\rho-c_b),
\end{equation}
where $[\cdot]' = \partial/\partial t + u \cdot \nabla$ is the derivative along the characteristic path.  

Each terms in \eqref{riccati_d} satisfies nonlinear ODEs along the same characteristics. These are
\begin{equation}\label{omega_rho}
\omega' + \omega d =0 \ and   \ \rho' + \rho d =0.
\end{equation}
In contrast to these, $\eta$ and $\xi$ are governed non-local ODEs
\begin{equation}\label{eta_xi}
\eta' +\eta d =k(R_{11}[\rho-c_b] - R_{22}[\rho-c_b]) \ and \      \xi' + \xi d = k (R_{12}[\rho-c_b]+R_{21}[\rho-c_b]).
\end{equation}
Here, $R_{ij}[\cdot]$ can be explicitly written as follows:
%is the Riesz transform in two-dimensional space:
\begin{equation}\label{R_ij}
(R_{i j}  [h])(x)  = p.v.\frac{1}{2\pi}\int_{\mathbb{R}^2} \frac{\partial^2}{\partial y_j \partial y_i}\log |y| \cdot h(x - y) \, d y +  \frac{h(x)}{2\pi}\int_{|z| =1}   z_i z_j  \, dz,
\end{equation}

Now, \eqref{riccati_d}-\eqref{R_ij} allow us to obtain the closed ODE system:
%these allow us to obtain the closed ODE system: 
\begin{equation}\label{ode2_intro}
\left\{
  \begin{array}{ll}
    d' = -\frac{1}{2}d^2 + A(t)\rho^2 +k(\rho-c_b), \\
    \rho' = -\rho d,\\
  \end{array}
\right.
\end{equation}
subject to initial data $(\nabla \mathbf{u}, \rho)(0,\cdot)=(\nabla \mathbf{u}_0, \rho_0),$
where
\begin{equation}\label{A_eqn_2}
A(t):=\frac{1}{2} \bigg{[} \bigg{(} \frac{\omega_0}{\rho_0} \bigg{)}^2 - \bigg{(} \frac{n_0}{\rho_0} + \int^t _0 \frac{f_1(\tau)}{\rho(\tau)} \, d \tau \bigg{)}^2 - \bigg{(} \frac{\xi_0}{\rho_0} + \int^t _0 \frac{f_2(\tau)}{\rho(\tau)} \, d \tau \bigg{)}^2  \bigg{]}.
\end{equation}
Here, $f_i$ are singular integrals inherited from the Riesz transform:
\begin{equation*}
f_1(t):=k(R_{11}[\rho-c_b] - R_{22}[\rho-c_b]) = \frac{k}{\pi} p.v.\int_{\mathbb{R}^2} \frac{-y^2 _1 + y^2 _2}{(y^2 _1 + y^2 _2)^2} \rho(t, x(t) - y ) \, dy,
\end{equation*}
and
\begin{equation*}
f_2(t):=k (R_{12}[\rho-c_b]+R_{21}[\rho-c_b]) = \frac{k}{\pi} p.v.\int_{\mathbb{R}^2} \frac{-2y_1 y_2}{(y^2 _1 + y^2 _2)^2} \rho(t, x(t) - y ) \, dy.
\end{equation*}
Detailed derivations of  \eqref{riccati_d}-\eqref{A_eqn_2} can be found in \cite{Y16, Lee20}. We note that all functions of consideration here are evaluated along the characteristic, that is, for example,
$f_i (t)=f_i(t, x(t))$ and $\eta(t)=\eta(t, x(t))$, etc.

From \eqref{A_eqn_2}, we see that $A(t)$ is uniformly bounded above. That is
 $$A(t) \leq \frac{1}{2} \bigg{(}\frac{\omega_ 0}{\rho_0} \bigg{)}^2 .$$ However, an existence of any lower bound for $A(t)$ is generally not known.
% to the best of the author's knowledge, an existence of any lower bound for $A(t)$ is not known. 
 The growth of $|A(t)|$ is related to the density's Riesz transform and non-local/singular nature of this makes it difficult to study the dynamics of system  \eqref{ode2_intro}.  $|A(t)|$ may blows-up in finite time. In this case, $(\rho(t), d(t)) \rightarrow (\infty, -\infty)$ in finite time (see \cite{Lee20}).  Thus, in the scope of a global regularity, it remains the case that $|A(t)|$ blows-up at infinity or is bounded uniformly. That is, we investigate the case that there exists some function $h(t)$ such that $A(t) \geq h(t)$, for all $t\geq 0$ and
$$h(t) \rightarrow -\infty \ as \ t \rightarrow \infty.$$
More discussions about the motivation of this study can be found in \cite{Lee20}. Furthermore, it turns out that many of so-called restricted/modified models \cite{Y16, Y17, LT03, Tan14} and the results under the vanishing initial vorticity assumption \cite{CT08, CT09} can be reinterpreted using our proposed structure \eqref{ode2_intro}-\eqref{A_eqn_2}.

In \cite{Lee20}, when $k<0$ and $c_b\neq 0$, it has been studied that the nonlinear-nonlocal system  \eqref{ode2_intro} admit global smooth solutions for a large set of initial configurations provided that
\begin{equation}\label{exp11}
A(t) \geq -\alpha_1 e^{\beta_1 t}, \ \ for \ all \ t,
\end{equation}
where $\alpha_1$ and $\beta_1$ are some positive constants. In other words, the system   \eqref{ode2_intro} can afford to have global solutions even if 
$A(t) \rightarrow -\infty, \ as \ t \rightarrow \infty$,
 as long as the blow-up rate of $|A(t)|$ is not higher than that of an exponential.

 In this work, we consider \eqref{ode2_intro} with $c_b =0$, and show similar results under the condition that
 \begin{equation}\label{poly_condi_33}
 A(t) \geq - (\alpha t + \beta)^s, \ for \ all \ t,
 \end{equation}
 where $\alpha, \beta >0$ and $s \geq 1$.
 In contrast to $c_b >0$ case, it turns out that when $c_b =0$, $(\rho, d)$  may blow up for \emph{all} initial data with the assumption in \eqref{exp11}. Thus, it is interesting to note that $c_b$ serves as a key factor that  distinguishes the dynamics of \eqref{ode2_intro}.

 %It is interesting to note that when $c_b=0$,  $(\rho, d)$ may blow up under \eqref{exp11}.

For notational convenience, in the rest of paper, we assume that $k=\pm 1$ and $\alpha=\beta=1$, because these constants are not essential in our analysis. Thus, we assume that
\begin{equation}\label{poly_condi_22}
A(t) \geq - ( t + 1)^s, \ for \ all \ t.
\end{equation}
We note that the above inequality already assumes that $A(0)\geq -1$, that is,
$$A(0)=\frac{1}{2} \bigg{[} \bigg{(} \frac{\omega_0}{\rho_0} \bigg{)}^2 - \bigg{(} \frac{n_0}{\rho_0}  \bigg{)}^2 - \bigg{(} \frac{\xi_0}{\rho_0}  \bigg{)}^2  \bigg{]}\geq -1.
$$
But this does not restrict our result, since one can always find $\beta$ that satisfy \eqref{poly_condi_33} for any $A(0)$.

To present our results we write \eqref{matrix_eqn} and the second equation of \eqref{omega_rho}, to establish the two-dimensional Euler-Poisson system:
%$$M' + M^2 =-R[\rho -1].$$
\begin{equation}\label{EP_system11}
\left\{
  \begin{array}{ll}
    \mathcal{M}' + \mathcal{M}^2 =kR[\rho ], \\
    \rho' = -\rho\cdot \mathrm{tr}(\mathcal{M}),\\
  \end{array}
\right.
\end{equation}
subject to initial data $(\mathcal{M},\rho)(0,\cdot)=(\mathcal{M}_0 , \rho_0)$.
%We note that the global regularity follows from the standard boot-strap argument, once an \emph{a priori} estimate on $\| \mathcal{M}(\cdot, \cdot) \|_{L^{\infty}}$ is obtained. Also, under \eqref{} or \eqref{}, $\mathcal{M}=\nabla \mathbf{u}$ is completely controlled by $d=\nabla \cdot \mathbf{u}$ and $\rho$:
%$$\|\mathcal{M}(t,\cdot) \|_{L^{\infty}[0,T]} \leq C_T \cdot \| (\mathrm{tr}\mathcal{M} , \rho) \|_{L^{\infty} [0,T]}.$$

Our goal of this work is to prove the following result.
\begin{theorem}\label{thm_poly}
Consider the Euler-Poisson system, \eqref{EP_system11}-\eqref{poly_condi_22} with either $k=1$ or $k=-1$. %Suppose
%$$-(t+1)^s \leq A(t), \ \ t\geq 0.$$ 
If $$(\rho_0 , d_0) \in \Omega_{ s}:= \{(\rho ,d) \in \mathbb{R}^2  |  \rho>0 , d >0, \ and \  d > m^* \rho + n^*      \},$$
 then the solution of the Euler-Poisson system remains smooth for all time.
 Here, $m^* = m_1 m^M _2$ and $n^* = n_1 n _2$ are constants satisfying
\begin{equation}\label{condi_parta}
M\geq \max\{s/2 ,1 \}, \ m_1 > \sqrt{2}, \ m_2 > \max \{n_2 ,1 \}, \ n_2 >0,
  \end{equation}
and
%$$n_1 > \frac{1}{(m^2 _1 -2)} \bigg{\{} \frac{1}{m^{2(M-1)} _2} + m^2 _1 M^2 + \frac{2m_1 M}{m^{M-1} _2}   \bigg{\}}.$$
\begin{equation*}
n_1 > \max{ \bigg{\{}  \frac{(m_2 + m_1 M m^M _2)^2}{2(m^2 _1 -2)m^{2M}_2} + \frac{1}{2}, 1  \bigg{\}} }.
\end{equation*}
\end{theorem}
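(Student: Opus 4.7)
The plan is to reduce the PDE system \eqref{EP_system11} to the closed ODE system \eqref{ode2_intro} along characteristics and then construct a time-dependent forward-invariant region in the $(\rho,d)$ plane whose trace at $t=0$ coincides with $\Omega_s$. The central device is the shifted quantity
\[
y(t):=d(t)-\lambda(t)\,\rho(t),\qquad \lambda(t):=m_1 m_2^{M}(1+t)^{M},
\]
chosen so that the hypothesis $(\rho_0,d_0)\in\Omega_s$ becomes $y(0)>n^{*}=n_1n_2$. A direct differentiation using \eqref{ode2_intro} with $c_b=0$ yields
\[
y'(t)=-\tfrac12 y^2+\bigl(A(t)+\tfrac12\lambda(t)^2\bigr)\rho^2+\bigl(k-\lambda'(t)\bigr)\rho.
\]
Under $M\ge\max\{s/2,1\}$, $m_1>\sqrt{2}$, and $m_2\ge 1$, the assumption \eqref{poly_condi_22} gives $A(t)+\tfrac12\lambda(t)^2\ge\tfrac12(m_1^2-2)m_2^{2M}(1+t)^{2M}\ge 0$, so the $\rho^2$-term may be dropped. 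Handling the two cases $k=\pm 1$ uniformly by the worst case $k=-1$, this leaves the one-sided Riccati inequality
\[
y'(t)\ge -\tfrac12 y^2-\bigl(1+\lambda'(t)\bigr)\rho(t).
\]

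The heart of the argument is a bootstrap that propagates $y(t)>0$ globally. Whenever $y>0$, one has $d>\lambda\rho>0$, so $\rho$ is strictly decreasing and $(1/\rho)'\ge\lambda(t)$; integration yields the quantitative decay $\rho(t)\lesssim (M+1)/(m_1 m_2^{M}(1+t)^{M+1})$. Substituting back renders the forcing $(1+\lambda'(t))\rho(t)$ of order $(1+t)^{-2}$, hence integrable in time. Comparing $y$ with the unperturbed flow $\bar y'=-\tfrac12\bar y^{2}$, and quantifying the admissible initial gap through the discriminant of the auxiliary quadratic in $\rho$ that emerges at $t=0$ (where $\lambda(0)=m_1 m_2^{M}$ and $\lambda'(0)=m_1 M m_2^{M}$), produces exactly the explicit lower bound
\[
n_1>\frac{(m_2+m_1 M m_2^{M})^2}{2(m_1^2-2)m_2^{2M}}+\tfrac12,
\]
which places $y(0)\ge n^{*}$ outside the finite-time blow-up basin of the perturbed Riccati. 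The condition $m_2>\max\{n_2,1\}$ appears when absorbing the cross contribution of $n_2$ in $n^{*}$ into the growth of $\lambda$.

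Once $y(t)>0$ is secured for every $t\ge 0$, one obtains $d(t)>0$ and $\rho(t)\le\rho_0$ along each characteristic. Combined with the transport equation \eqref{omega_rho} for $\omega$ and the nonlocal evolutions \eqref{eta_xi} for $\eta$ and $\xi$, this delivers uniform pointwise bounds on every entry of $\mathcal{M}=\nabla\mathbf{u}$, thereby ruling out finite-time blow-up; a standard continuation argument then promotes the local smooth solution to a global one. The principal obstacle is the bootstrap step itself: the positivity of $\tfrac12\lambda^2+A$ alone is not enough, since both the drift $-(1+\lambda'(t))\rho$ and the Riccati term $-\tfrac12 y^{2}$ push $y$ downward. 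The delicate balancing of the polynomial decay of $\rho$ (controlled by $M$) against the polynomial growth of $\lambda'$, together with a sufficiently large initial gap $n^{*}$, is where the numerical conditions \eqref{condi_parta} and the explicit lower bound on $n_1$ are exactly consumed.
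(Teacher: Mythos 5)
Your reformulation via the shifted quantity $y(t)=d(t)-\lambda(t)\rho(t)$, $\lambda(t)=m_1m_2^M(1+t)^M$, is a reasonable two-dimensional rephrasing of the paper's three-dimensional invariant-region-plus-comparison argument, and the computation $y'=-\tfrac12y^2+\bigl(A+\tfrac12\lambda^2\bigr)\rho^2+(k-\lambda')\rho$ together with the lower bound $A+\tfrac12\lambda^2\ge\tfrac12(m_1^2-2)m_2^{2M}(1+t)^{2M}\ge 0$ is correct and matches the leading coefficient of the paper's quadratic \eqref{qd_1}. Working directly with $y$ in the $(\rho,d)$ plane would, if carried through, subsume the paper's Lemmas 3.1--3.3 in one stroke. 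However, the argument breaks at the step where you drop the $\rho^2$ term to arrive at $y'\ge-\tfrac12 y^2-(1+\lambda')\rho$. That nonnegative $\rho^2$ term is precisely what protects $y$ against the downward drift when $\rho$ is large: the theorem imposes no upper bound on $\rho_0$, so at $t=0$ the discarded term grows like $\rho_0^2$ while the retained drift $-(1+\lambda'(0))\rho_0$ grows linearly, and without the quadratic your reduced inequality lets $y$ plunge through zero before the polynomial decay of $\rho$ has had any time to act. The bootstrap estimate $\rho(t)\lesssim(1+t)^{-(M+1)}$ only becomes useful for $t$ bounded away from zero; on a short initial window the forcing is of order $\rho_0$, which is uncontrolled. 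Invoking "the discriminant of the auxiliary quadratic at $t=0$" afterward is internally inconsistent with having already thrown away the quadratic, and in any case the discriminant condition must hold for \emph{every} $t$, not just $t=0$ -- this is exactly the content of the paper's inequality \eqref{surface_eqn} and Lemma \ref{lemma_onsurface}, which your sketch does not establish.

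There is a second, subtler gap: even retaining the $\rho^2$ term, the barrier $y=0$ is too weak. At a first touching time $y(t_*)=0$ the constant term in the quadratic is zero, so one gets $y'(t_*)\ge 0$ but not $y'(t_*)>0$; invariance does not follow by a strict sign. The paper sidesteps this by building a strictly positive, decreasing barrier $n(t)=n_1(t+n_2)^{-1}$ and enforcing the region $y>n(t)$, for which the constant term $-\tfrac12 n^2-n'$ can be made strictly positive via the choice $N=1$ and the size condition on $n_1$; this yields the strict sign at the boundary that your $y>0$ construct cannot. Relatedly, your stated role of $m_2>\max\{n_2,1\}$ (``absorbing the cross contribution of $n_2$ into the growth of $\lambda$'') is not what the paper uses it for: that condition is what lets one replace $(x+n_2)$ by $(x+m_2)$ when bounding the numerator in \eqref{RHS_T} and when comparing $2(x+m_2)^{2M}$ against $2(x+1)^s$ in \eqref{RHS_B}. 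To repair the proposal you would need to keep the full quadratic in $\rho$, introduce a genuinely positive time-decaying barrier in place of the zero level set, and verify the discriminant inequality uniformly in $t\ge 0$ rather than only at the initial time.
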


\textbf{Remarks:} Some remarks are in order.

(1) We first note that  the sub-threshold condition for global existence in the theorem works for both attractive and repulsive forcing cases.

(2) For simplicity, we set $\alpha=\beta=1$ in \eqref{poly_condi_33} and $k=\pm 1$ in \eqref{EP_system11}. Our method works equally well for general constants.

(3) As discussed in \cite{Lee20}, finite time blow-up of $|A(t)|$ leads to finite time blow-up of solutions for \eqref{EP_system11}. The main contribution of the theorem is that the Riccati structure \eqref{EP_system11} affords to have smooth solutions while $A(t)$ freely moves under the condition
$$A(t) \geq -(\alpha t + \beta)^s, \ \ for \ all \ t.$$
 In particular, the system admits global solutions even though $A(t)$ blows-up at infinity, as long as $A(t) \rightarrow -\infty$ with a polynomial blow-up rate at any order.

(4) The proof of theorem is based on the method proposed in the preceding paper \cite{Lee20}. That is, we first construct an auxiliary system in $3$D space and find an invariant space of the system. The invariant space, where all trajectories if they start from inside this space will stay encompassed at all time. The projection of the $3$D invariant space onto $2$D space will serve as an invariant region of system  \eqref{EP_system11}.  The key parts of the proofs are constructing the surface that determines invariant space of the auxiliary system, and establishing monotonicity relation between the auxiliary system and the original system.

(5) Consider \eqref{ode2_intro}. There are two aspects that distinguish $c_b=0$ case from $c_b \neq 0$ for threshold analysis. First, the solutions in $c_b=0$ case blow-up in finite time for all initial data if
$$A(t) \sim -e^t$$
as opposed to $c_b\neq 0$ case can afford to have global smooth solutions. We refer to the Appendix section of this paper for more discussion about this.
Secondly, when $c_b =0$ and $k>0$, there may exist oscillatory solutions $(\rho(t), d(t))$, with arbitrarily large amplitudes, see Figure \ref{fig111} (note that the equilibrium point is $(\frac{1}{A(t)}, 0)$). This makes difficult to `trap' a trajectory because as $A(t)$ (which we do not know exact behavior) changes, the oscillatory type orbit may change to blow-up type trajectory, or the other way around too. In contrast to this, when $c_b \neq 0$, $(\rho(t), d(t))$ are either blow-up in finite time or globally exist for each fixed values of $A(t)$. %This difference leads to developing new way... 
This difference leads to the difference in the construction of invariant space, comparing to the one in \cite{Lee20}. See Figure \ref{fig1}.

%In [], we show that both \eqref{poly_11} and \eqref{exp_11} have some sub-thresholds for global existences when $c_b>0$.
%When $c_b=0$, interestingly, only system \eqref{poly_11} has global existence while the solution in \eqref{exp_11} blows-up for every initial data.

%Secondly, when $c_b =0$, majorities of $(\rho(t), d(t))$ are oscillatory, unless blow-up in finite time.    See Figure \ref{fig111}. In contrast to this, the solutions in $c_b\neq 0$ case are either blow-up in finite time or globally exist for each fixed values of $A(t)$.

%Secondly, when $c_b =0$, there may exist oscillatory solutions $(\rho(t), d(t))$, with arbitrarily large amplitudes, see Figure \ref{fig111} (note that the equilibrium point is $(\frac{1}{A(t)}, 0)$). This makes it difficult to 'trap' a trajectory because as $A(t)$ (which we do not know exact behavior) changes, the oscillatory type orbit may change to blow-up type orbit. In contrast to this, when $c_b \neq 0$, $(\rho(t), d(t))$ are either blow-up in finite time or globally exist for each fixed values of $A(t)$.

%Secondly, when $c_b \neq 0$, for each fixed $A(t)$ values,  $(\rho(t), d(t))$ are either blow-up in finite time or globally exist for each fixed values of $A(t)$.

%While!

%More discussion about this can be found in the Appendix section of  discussed in the Appendix section.

%the solutions in $c_b\neq 0$ case  have global smooth solution. 

%In contrast to non-zero background case in \cite{Lee20},

\begin{figure}[h]
\begin{subfigure}{.85\textwidth}
  \centering
  \includegraphics[width=1\linewidth]{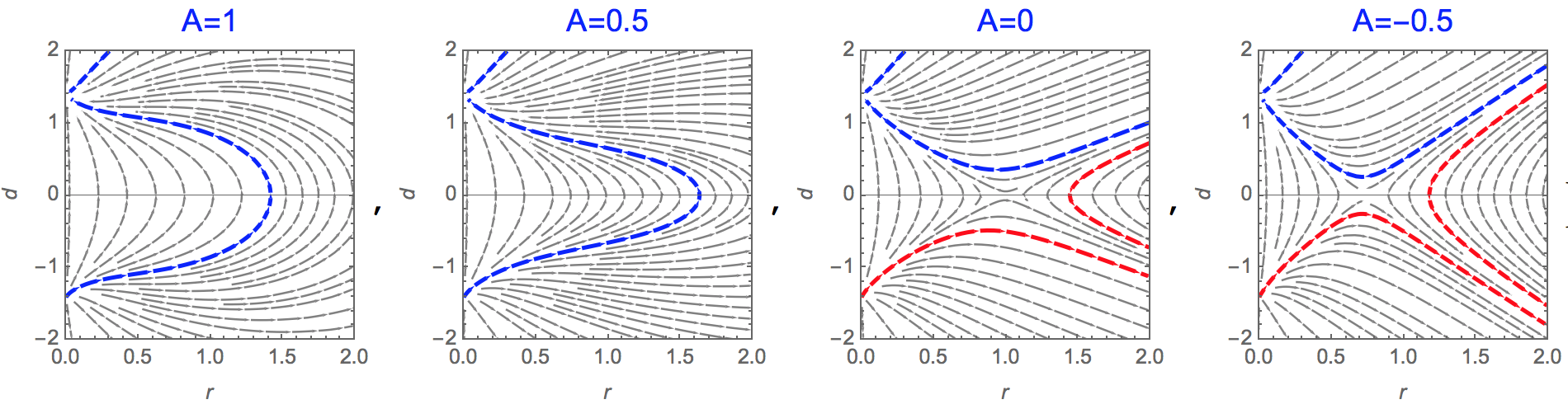}
  \caption{$c_b=1$ and $k=-1$ }
  \label{fig_att:sfig1}
\end{subfigure}%

\begin{subfigure}{.85\textwidth}
  \centering
  \includegraphics[width=1\linewidth]{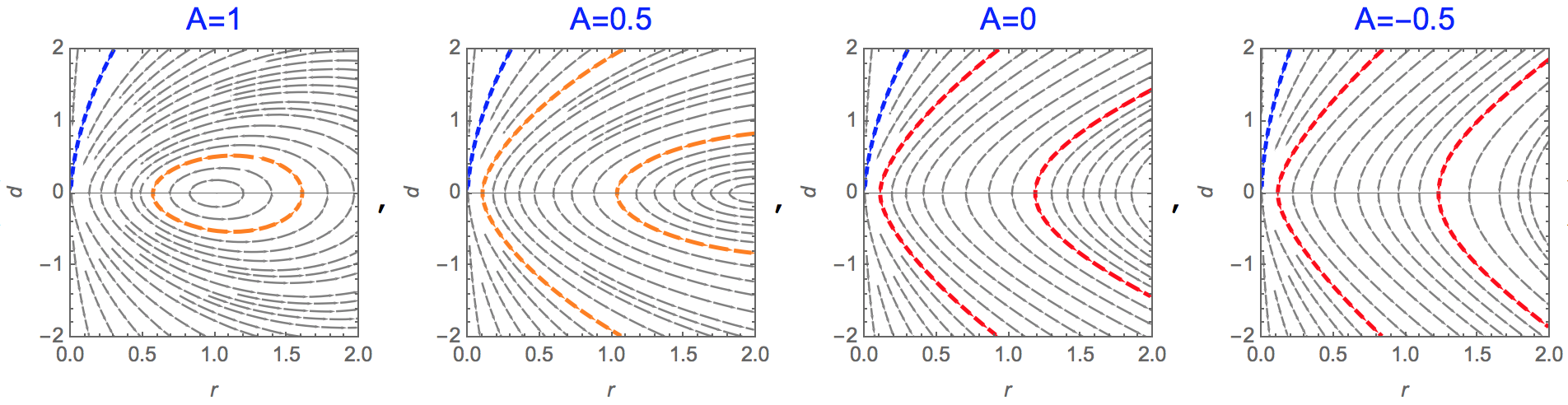}
  \caption{$c_b=0$ and $k=-1$ }
  \label{fig_att:sfig2}
\end{subfigure}

\begin{subfigure}{.85\textwidth}
  \centering
  \includegraphics[width=1\linewidth]{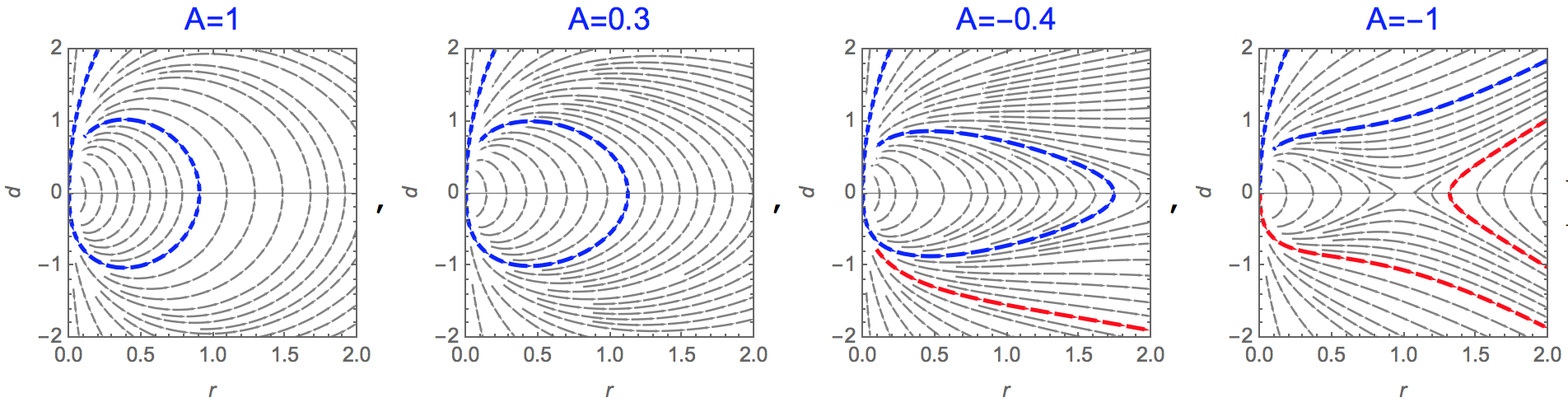}
  \caption{$c_b=0$ and $k=1$}
  \label{fig_att:sfig3}
\end{subfigure}
\caption{Numerical approximation of trajectories in the phase plane of $(\rho, d)$ with various $A$ values  for system \eqref{ode2_intro}.  For each fixed $A$ values,  blue trajectories converge to an equilibrium point, orange trajectories are oscillating solutions, and red trajectories are finite time blow-up solutions.  Top: Attractive forcing with non-zero back-ground case.  Middle: Attractive forcing with zero back-ground case. Bottom: Repulsive forcing with zero back-ground case.}\label{fig111}
\end{figure}

\section{Proof of theorem \ref{thm_poly}}
We start this section by considering the following nonlinear ODE system with the time dependent coefficient,
\begin{equation}\label{ode1_poly}
\left\{
  \begin{array}{ll}
    \dot{b} = -b^2 /2 -(t+1)^s a^2 - a , \\
    \dot{a} = -ba.\\
  \end{array}
\right.
\end{equation}

Setting $B(t)=t+1$, one can rewrite the system as follows:
%Consider
\begin{equation}\label{3by3_system}
\left\{
  \begin{array}{ll}
    \dot{b} = -\frac{1}{2}b^2 - B^s a^2 -a , \\
    \dot{a} = -ba\\
    \dot{B} =1
  \end{array}
\right.
\end{equation}
with $(a , b , B)\big{|}_{t=0} = (a_0 , b_0 , B_0 =1).$
%We note that $B(t) = t+1$.

We shall find a set of initial data for which the solution of \eqref{3by3_system} exists for all time. Consider surface
$$b=m(B-1) a + n(B-1), \ B\geq1$$
in $(a,b, B)$ space where $m(\cdot)$ and $n(\cdot)$ are positive on $[0,\infty$) and continuously differentiable.
%Since $t=B-1$, we have
%$$b=m(B-1) a - n(B-1).$$
We find conditions on $m(\cdot)$ and $n(\cdot)$ such that trajectory $(a , b, B)$ stays on one side of  $3$-dimensional  surface
\begin{equation}\label{f_surface}
F(a , b , B):=b - m(B-1) a - n(B-1)=0.
\end{equation}
In order to do that, it requries
\begin{equation}\label{dot_product}
\langle \dot{a}, \dot{b}, \dot{B} \rangle \cdot \nabla F >0,
\end{equation}
on the surface $F(a ,b , B)=0$, where
$$\nabla F = \langle -m(B-1), 1, -m'(B-1)a - n'(B-1) \rangle.$$ 

Upon expanding \eqref{dot_product} and substituting \eqref{f_surface}, the left hand side of \eqref{dot_product} can be written as
\begin{equation*}
\begin{split}
&\big{\langle} -ba,  -\frac{1}{2}b^2 - B^s a^2 -a  , 1 \big{\rangle} \cdot \langle -m(B-1), 1, -m'(B-1)a - n'(B-1) \rangle \\
&\Rightarrow ba m(B-1) -\frac{1}{2}b^2 - B^s a^2 -a -m'(B-1)a - n'(B-1)\\
&\Rightarrow (ma +n) a m - \frac{1}{2}(ma +n)^2 - B^s a^2 - a -m' a + n'\\
&\Rightarrow \bigg{(} \frac{1}{2}m^2 - B^s \bigg{)} a^2 - (1+m')a - \frac{1}{2}n^2 - n' . 
\end{split}
\end{equation*}
Here and below $m$ and $n$ are evaluated at $B-1$. 

Thus, on the surface $F(a ,b , B)=0$, \eqref{dot_product} is equivalent to
\begin{equation}\label{qd_1}
\bigg{(} \frac{1}{2}m^2 - B^s \bigg{)} a^2 - (1+m')a - \frac{1}{2}n^2 - n'  >0.
\end{equation}
We will find $m$ and $n$ such that the above inequality holds for some set of $(a, b ,B)$. The inequality is quadratic in $a$. Assuming $ \frac{1}{2}m^2 - B^s >0$, for all $B\geq 1$, we shall acheive
$$(1+m')^2 -4 \big{(} \frac{1}{2}m^2 - B^s \big{)} \big{(}- \frac{1}{2}n^2 - n' \big{)}<0,$$
for all $B\geq 1$. Here, the left hand side of the above inequality is the discriminant of the quadratic equation in \eqref{qd_1}.
Thus, it suffices  to find $m$ and $n$ such that
$$-n' > \frac{(1+m')^2}{2(m^2 - 2B^s)} + \frac{1}{2}n^2.$$
Note that $m$ and $n$ are evaluated at $B-1$, so letting $$x=B-1,$$ and writing the above inequality in terms of $x$ gives,
\begin{equation}\label{surface_eqn}
-n'(x) > \frac{(1+m'(x))^2}{2(m^2 (x) -2(x+1)^s)} + \frac{1}{2}n^2 (x).
\end{equation}

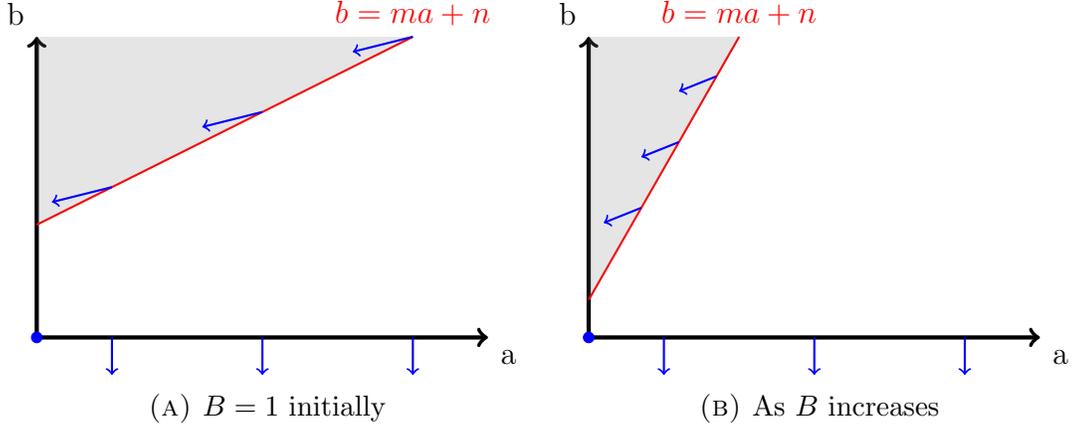
\begin{figure}[h]  
\centering 
  \begin{subfigure}[b]{0.45\linewidth}
\begin{tikzpicture}
%[scale=1.2, transform shape]
\draw[ultra thick,->] (0,0) -- (6,0) node[anchor=north west] {a};
\draw[ultra thick,->] (0,0) -- (0,4) node[anchor=south east] {b};
\draw [thick, red] (0,1.5)  -- (5,4)  node[align=right, above]
{$b=ma + n$};
\draw [fill,blue] (0,0)  circle [radius=.07];
  \fill [gray, opacity=0.2, domain=0:2, variable=\x]
      (0, 1.5)
      %-- plot ({\x}, 3.9)
      --  (0, 4)
      --  (5,4)
      -- cycle;
\draw[thick, blue, ->] (1,0) -- (1,-0.5);
\draw[thick, blue, ->] (3,0) -- (3,-0.5);
\draw[thick, blue, ->] (5,0) -- (5,-0.5);
\draw[thick, blue, ->] (1, 2.5*1/5 + 1.5) -- (1 -0.8, 2.5*1/5  + 1.5 -0.2) ;
\draw[thick, blue, ->] (3, 2.5*3/5 + 1.5) -- (3 -0.8, 2.5*3/5  + 1.5 -0.2) ;
\draw[thick, blue, ->] (5, 2.5*5/5 + 1.5) -- (5 -0.8, 2.5*5/5  + 1.5 -0.2) ;
\end{tikzpicture}
    \caption{$B=1$ initially} \label{fig:M1}  
  \end{subfigure} 
\begin{subfigure}[b]{0.45\linewidth}
\begin{tikzpicture}
%[scale=1.2, transform shape]
\draw[ultra thick,->] (0,0) -- (6,0) node[anchor=north west] {a};
\draw[ultra thick,->] (0,0) -- (0,4) node[anchor=south east] {b};
\draw [thick, red] (0,0.5)  -- (2,4)  node[align=right, above]
{$b=ma + n$};
\draw [fill,blue] (0,0)  circle [radius=.07];
  \fill [gray, opacity=0.2, domain=0:2, variable=\x]
      (0, 0.5)
      %-- plot ({\x}, 3.9)
      --  (0, 4)
      --  (2,4)
      -- cycle;
\draw[thick, blue, ->] (1,0) -- (1,-0.5);
\draw[thick, blue, ->] (3,0) -- (3,-0.5);
\draw[thick, blue, ->] (5,0) -- (5,-0.5);
\draw[thick, blue, ->] (0.7, 3.5*0.7/2 + 0.5) -- (0.7-0.5, 3.5*0.7/2 + 0.5-0.2);
\draw[thick, blue, ->] (1.2, 3.5*1.2/2 + 0.5) -- (1.2-0.5, 3.5*1.2/2 + 0.5-0.2) ;
\draw[thick, blue, ->] (1.7, 3.5*1.7/2 + 0.5) -- (1.7-0.5, 3.5*1.7/2 + 0.5-0.2) ;
\end{tikzpicture}

\caption{As $B$ increases} \label{fig:M2}  
\end{subfigure}
\caption{Cross-section of the ``shrinking" invariant space}\label{fig1}
\end{figure}

\textbf{Construction of $m(x)$ and $n(x)$}.   We prove the existences of $m(x)$ and $n(x)$. More precisely, we find simple polynomials 
$$n(x):=n_1 (x+n_2)^{-N}, \ \  and \ \ m(x)=m_1 (x+m_2)^M$$
that satisfy \eqref{surface_eqn} and  $m^2 (x) -2(x+1)^s >0$ for all $x\geq 0$.  We want to emphasize the method and not the technicalities, so our construction here may not be optimal, and one may obtain sharper functions $n(x)$ and $m(x)$ later.

\begin{lemma}\label{lemma_onsurface} 
If $m_1 >\sqrt{2}$, $m_2 > \max\{n_2 ,1 \}$, $M\geq \max \{ \frac{s}{2},1 \}$, $N=1$ and 
%$$n_1 > \frac{1}{(m^2 _1 -2)} \bigg{\{} \frac{1}{m^{2(M-1)} _2} + m^2 _1 M^2 + \frac{2m_1 M}{m^{M-1} _2}   \bigg{\}},$$
\begin{equation}\label{lemma_key_ineq}
n_1 > \max{ \bigg{\{}  \frac{(m_2 + m_1 M m^M _2)^2}{2(m^2 _1 -2)m^{2M}_2} + \frac{1}{2}, 1  \bigg{\}} }
\end{equation}
then
\begin{equation*}
-n'(x) > \frac{(1+m'(x))^2}{2(m^2 (x) -2(x+1)^s)} + \frac{1}{2}n^2 (x),
\end{equation*}
for all $x\geq 0$. Thus \eqref{surface_eqn} holds.
\end{lemma}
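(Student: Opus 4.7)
The plan is to substitute the explicit forms $n(x) = n_1(x+n_2)^{-1}$ and $m(x) = m_1(x+m_2)^M$ into \eqref{surface_eqn}, and then peel off the $x$-dependence using each hypothesis in turn until a purely numerical inequality in $n_1$ remains. Explicitly, $m'(x) = m_1 M(x+m_2)^{M-1}$, $-n'(x) = n_1(x+n_2)^{-2}$, and $\tfrac{1}{2}n^2(x) = n_1^2/\bigl(2(x+n_2)^2\bigr)$, so \eqref{surface_eqn} becomes a three-way comparison between rational functions of $x$.

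The first step is to lower bound the denominator $m^2 - 2(x+1)^s$. The chain $(x+1)^s \leq (x+1)^{2M} \leq (x+m_2)^{2M}$, which holds thanks to $m_2 \geq 1$ and $2M \geq s$, combined with $m_1^2 > 2$, delivers the clean bound
\[
m_1^2(x+m_2)^{2M} - 2(x+1)^s \ \geq \ (m_1^2-2)(x+m_2)^{2M}.
\]

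The second step is to upper bound the numerator $(1+m')^2$. Factoring $1 + m_1 M(x+m_2)^{M-1} = (x+m_2)^{M-1}\bigl((x+m_2)^{1-M}+m_1 M\bigr)$ (valid because $M\geq 1$) and using the monotonicity $(x+m_2)^{1-M} \leq m_2^{1-M}$ for $M \geq 1$ and $m_2 \geq 1$ gives
\[
(1+m'(x))^2 \ \leq \ (m_2^{1-M}+m_1 M)^2\,(x+m_2)^{2M-2}.
\]
Substituting both bounds into the target inequality, the middle term is controlled by $(m_2^{1-M}+m_1 M)^2/\bigl[2(m_1^2-2)(x+m_2)^2\bigr]$; rewriting $(m_2^{1-M}+m_1 M)^2 = (m_2+m_1 M m_2^M)^2/m_2^{2M}$ puts this in the exact form of the constant appearing in \eqref{lemma_key_ineq}.

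Finally, because $m_2 > n_2$ forces $(x+m_2)^{-2} \leq (x+n_2)^{-2}$, every surviving $x$-dependence shares the common factor $(x+n_2)^{-2}$; multiplying through by $(x+n_2)^2$ collapses the problem to a pure constant inequality in $n_1$, which is precisely the inequality \eqref{lemma_key_ineq} asserted in the hypothesis (coupled with $n_1 > 1$ to dominate the $n^2/2$ term). The main obstacle is the bookkeeping: making sure that the exponent $2M-2$ from the numerator bound and the exponent $2M$ from the denominator bound combine so that the residual $x$-dependent factor is exactly $(x+m_2)^{-2}$ and hence can be traded for $(x+n_2)^{-2}$. This is why the conditions $m_1>\sqrt{2}$, $m_2 > \max\{n_2,1\}$, and $M \geq \max\{s/2,1\}$ all intervene simultaneously in the final algebra, rather than any one of them being used in isolation.
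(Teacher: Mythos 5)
Your argument runs in parallel with the paper's proof up through the bounding of the discriminant-type terms, and your handling of the numerator and denominator is sound: the chain $(x+1)^s\leq(x+m_2)^{2M}$ gives the lower bound $2(m_1^2-2)(x+m_2)^{2M}$ for the denominator, and factoring $1+m'(x)=(x+m_2)^{M-1}\bigl((x+m_2)^{1-M}+m_1M\bigr)$ then using $(x+m_2)^{1-M}\leq m_2^{1-M}$ yields the clean constant times $(x+m_2)^{2M-2}$ for the numerator. The rewrite $(m_2^{1-M}+m_1M)^2=(m_2+m_1Mm_2^M)^2/m_2^{2M}$ is correct, and trading $(x+m_2)^{-2}$ for $(x+n_2)^{-2}$ via $m_2>n_2$ is fine. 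Where the paper bounds $(x+n_2)^2<(x+m_2)^2$ and keeps the $x$-dependence until arguing monotonicity and evaluating at $x=0$, you factor first; both routes land at the same constant.

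The gap is in the last sentence. After multiplying by $(x+n_2)^2$, the sufficient condition you have actually derived is
\begin{equation*}
n_1 \ > \ \frac{(m_2+m_1Mm_2^M)^2}{2(m_1^2-2)m_2^{2M}} \ + \ \frac{n_1^2}{2},
\end{equation*}
because, as you yourself computed at the start, $\tfrac{1}{2}n^2(x)=n_1^2/\bigl(2(x+n_2)^2\bigr)$. This is \emph{not} the inequality \eqref{lemma_key_ineq}, whose last term is $\tfrac12$, not $n_1^2/2$. The claim that $n_1>1$ ``dominates the $n^2/2$ term'' runs backwards: $n_1>1$ forces $n_1^2/2>\tfrac12$, so it makes the required inequality \emph{harder}, not easier; and the inequality $n_1>C+n_1^2/2$ can only hold for $n_1$ in a bounded interval near $1$ (and only when $C<\tfrac12$), which is incompatible with the ``$n_1$ large'' reading of \eqref{lemma_key_ineq}. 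So the final reduction as you have written it does not close.

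It is worth noting that the paper's own proof reaches this same juncture via \eqref{ineq_lemma_11}, where the last term is written $\tfrac{1}{2}\cdot\tfrac{1}{n_1^2(x+n_2)^{N-1}}$. Substituting $n(x)=n_1(x+n_2)^{-N}$ into $\tfrac12 n^2(x)$ and multiplying by $(x+n_2)^{N+1}$ actually produces $\tfrac{n_1^2}{2}(x+n_2)^{1-N}$, so the paper appears to have inverted $n_1^2$ to $1/n_1^2$ at exactly this spot; with $1/(2n_1^2)$ in place, the hypothesis $n_1>1$ does make the term less than $\tfrac12$ and the proof closes as stated. You did this piece of algebra correctly, which is precisely why your final sentence needs a real argument in place of the appeal to $n_1>1$: either the hypothesis \eqref{lemma_key_ineq} has to be replaced by one that genuinely controls $n_1^2/2$ (a two-sided condition on $n_1$), or the term must be bounded differently.
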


\begin{proof}
We show that
$$-n' > \frac{(1+m')^2}{2(m^2 -2(x+1)^s)} + \frac{1}{2}n^2 .$$
Substituting $n(x)=n_1 (x+n_2)^{-N}$ and $m(x) = m_1 (x+m_2)^M$, we consider
\begin{equation}\label{ineq_lemma_11}
n_1 N > \frac{ \{1+m_1 M (x+m_2)^{M-1} \}^2 (x+n_2)^{N+1} }{ 2\{ m^2 _1 (x+m_2)^{2M} -2(x+1)^s \} } + \frac{1}{2}\cdot \frac{1}{n^2 _1 (x+n_2)^{N-1}}
\end{equation}
Since the left hand side of the inequality is a constant, the right hand side must be non-increasing in $x$. So considering the dominating terms, this is achieved only when $N=1$.

Using $m_2 > n_2$, we bound the numerator in the right hand side of \eqref{ineq_lemma_11} first;
\begin{equation}\label{RHS_T}
 \{1+m_1 M (x+m_2)^{M-1} \}^2 (x+n_2)^{2} < \{ (x+m_2) + m_1 M (x+m_2)^M \}^2
\end{equation}
The denominator in \eqref{ineq_lemma_11} can  be bounded below;
\begin{equation}\label{RHS_B}
\begin{split}
2\{ m^2 _1 (x+m_2)^{2M} -2(x+1)^s \} &> 2 \{m^2 _1 (x+m_2)^{2M} - 2(x+m_2)^{2M} \}\\
&= 2(m^2 _1 -2 ) (x+m_2)^{2M} >0.
\end{split}
\end{equation}
Here, $m_2>1$, $M>s/2$ and $m_1 > \sqrt{2}$ are used to establish the inequalities.

Since $N=1$, from \eqref{ineq_lemma_11}, \eqref{RHS_T} and \eqref{RHS_B}, we see that it suffices to prove
$$n_1 > \frac{\{ (x+m_2) + m_1 M (x+m_2)^M \}^2}{ 2(m^2 _1 -2 ) (x+m_2)^{2M}}+ \frac{1}{2n^2 _1}.$$
We note that the right hand side of the above inequality is non-increasing in $x$ and the left hand side is a constant. Thus, it suffices to hold at $x = 0$, that is,
$$n_1 > \frac{(m_2 + m_1 M m^M _2)^2}{2(m^2 _1 -2)m^{2M}_2} + \frac{1}{2n^2 _1}.$$
Since $1/(2n^2 _1) < 1/2$ when $n_1 >1$, \eqref{lemma_key_ineq} implies the above inequality. This completes the proof.
%$$n_1 > \max{ \bigg{\{}  \frac{(m_2 + m_1 M m^M _2)^2}{2(m^2 _1 -2)m^{2M}_2} + \frac{1}{2}, 1  \bigg{\}} }$$
\end{proof}

We see that the surface $F(a,b,B)=b-m(B-1)a-n(B-1)=0$ determines the invariant space of solutions $(a(t), b(t), B(t))$ for system \eqref{3by3_system}. More precisely, let
$$\tilde{\Omega}:=\{(a,b,B) \in \mathbb{R}^3 \ | a>0, F(a,b,B)>0, B\geq 1 \}.$$
Then,  all trajectories start from inside this space  will stay encompassed at all time. Furthermore, since $m(\cdot)$ and $n(\cdot)$ are positive, we note that $\tilde{\Omega}$ is located above the $aB-$plane.  See Figure \ref{fig1}. 

For system \eqref{ode1_poly}, these properties can be summarized as follows.
\begin{lemma}\label{lemma_invariant}
Consider \eqref{ode1_poly}. Let $\Omega:=\{(a ,b) \in \mathbb{R}^2  |  a>0 ,  and \  b > m(0) a + n(0)      \}$.
If $(a_0 , b_0) \in \Omega$, then $0<b(t)$ and $a(t) \leq a_0$ for all $t\geq 0$.
\end{lemma}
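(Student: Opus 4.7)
The plan is to promote \eqref{ode1_poly} to the three-dimensional autonomous system \eqref{3by3_system}, verify that
\begin{equation*}
\tilde{\Omega} := \{(a,b,B) : a>0,\ F(a,b,B)>0,\ B\geq 1\}
\end{equation*}
is positively invariant under its flow, and then project the three-dimensional conclusion back onto $(a,b)$.

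Forward invariance I would check on each of the three boundary pieces. On $\{B=1\}$ the flow points inward into $\{B\geq 1\}$ because $\dot{B}=1>0$. On $\{a=0\}$, the formula $a(t)=a_0\exp\bigl(-\int_0^t b\,d\tau\bigr)$ coming from $\dot{a}=-ba$ shows that $a$ remains strictly positive as long as $b$ stays finite, so the face is unreachable from $a_0>0$. On the curved face $\{F=0\}$, Lemma \ref{lemma_onsurface} delivers exactly the inequality \eqref{surface_eqn}, which by construction is the transversality condition $\langle\dot{a},\dot{b},\dot{B}\rangle\cdot\nabla F>0$ obtained in the calculation just preceding that lemma. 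Hence $F$ is strictly increasing at any would-be exit point of $\tilde{\Omega}$, and no trajectory can leave through this face.

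Once invariance is in place the lemma is essentially one line. For $(a_0,b_0)\in\Omega$ the lifted initial point $(a_0,b_0,1)$ satisfies $a_0>0$ and $F(a_0,b_0,1)=b_0-m(0)a_0-n(0)>0$, so it lies in $\tilde{\Omega}$; invariance then yields, for every $t\geq 0$,
\begin{equation*}
a(t)>0 \qquad\text{and}\qquad b(t)>m(t)\,a(t)+n(t)>0,
\end{equation*}
with positivity of the right-hand side following from $m,n>0$ on $[0,\infty)$. The first estimate is the claim $b(t)>0$; feeding $a,b>0$ back into $\dot{a}=-ba$ gives $\dot{a}<0$, so $a$ is strictly decreasing and $a(t)\leq a_0$.

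I do not expect a genuine obstacle, as all of the analytical content is already packaged in Lemma \ref{lemma_onsurface}. The only additional detail worth recording is that global existence on $[0,\infty)$ is automatic inside $\tilde{\Omega}$: the bound $a\in(0,a_0]$ together with $\dot{b}=-\tfrac{1}{2}b^2-B^s a^2-a<0$ confines $b$ to $(0,b_0]$, so no component can blow up in finite time and the conclusion truly holds for all $t\geq 0$.
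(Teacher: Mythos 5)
Your proof is correct and follows essentially the same approach as the paper: Lemma~\ref{lemma_onsurface} furnishes the transversality condition $\langle\dot a,\dot b,\dot B\rangle\cdot\nabla F>0$ on $\{F=0\}$, the explicit formula $a(t)=a_0 e^{-\int_0^t b\,d\tau}$ keeps $a$ positive, and positivity of $m,n$ then forces $b>ma+n>0$. Your closing observation that global existence on $[0,\infty)$ is automatic inside $\tilde{\Omega}$ (since $a\in(0,a_0]$ and $\dot b<0$ trap $b\in(0,b_0]$) is a worthwhile detail that the paper's one-line proof leaves implicit.
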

\begin{proof}
Solving the second equation of \eqref{ode1_poly} gives
\begin{equation}\label{a_eqn}
a(t)=a_0 e^{-\int^t _0 b(\tau) \, d \tau},
\end{equation}
and this implies that if $a_0 >0$, then $a(t)>0$, $t\geq 0$.  Together with this, Lemma \ref{lemma_onsurface} gives $b(t)>0$, $t\geq 0$.  Next, $a(t) \leq a_0$ is obtained from the positivity of $b(t)$ for all $t\geq 0$ and \eqref{a_eqn}.
\end{proof}

Now, the final step of the proof is to compare
\begin{equation}\label{comp_epsystem}
\left\{
  \begin{array}{ll}
    \dot{d} = -d^2/2 +A(t) \rho^2 +k\rho, \\
    \dot{\rho} = -d\rho\\
  \end{array}
\right.
\end{equation}
with
\begin{equation}\label{comp_auxsystem}
\left\{
  \begin{array}{ll}
    \dot{b} = -b^2 /2 -(t+1)^s a^2 -a , \\
    \dot{a} = -ba.\\
  \end{array}
\right.
\end{equation}
Here, $k$ is either $1$ or $-1$.
We recall that
$$-(t+1)^s \leq A(t) \leq  \frac{1}{2}\bigg{(} \frac{\omega_0}{\rho_0} \bigg{)}^2, \ \ t\geq 0.$$
%We let $l=1$ for simplicity. 
We have the monotonicity relation between two ODE systems. We should point out that auxiliary system \eqref{comp_auxsystem} serves as a comparison system for \emph{both} repulsive and attractive cases.
\begin{lemma}\label{lemma_comp}
Consider \eqref{comp_epsystem} and \eqref{comp_auxsystem}, with either $k=1$ or $k=-1$. Then
\begin{equation*}
\left\{
  \begin{array}{ll}
	b(0) < d(0), \\
    0<\rho(0)<a(0)\\
  \end{array}
\right.
implies
\ 
\left\{
  \begin{array}{ll}
	b(t) < d(t), \\
    0<\rho(t)<a(t)\\
  \end{array}
\right.
\ for \ all \ t>0.
\end{equation*}
\end{lemma}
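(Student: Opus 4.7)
The plan is a continuation-type comparison argument. First observe that $\rho$ and $a$ solve equations that are linear in themselves ($\rho' = -d\rho$ and $a' = -ba$), so their initial positivity is automatically preserved as long as $d$ and $b$ remain finite. Consequently the only quantities at risk are the gaps $u(t) := d(t) - b(t)$ and $v(t) := a(t) - \rho(t)$, both strictly positive at $t = 0$ by hypothesis. Set
$$T^* := \sup\{\, t > 0 \,:\, u(s) > 0 \text{ and } v(s) > 0 \text{ for every } s \in [0, t]\,\},$$
and suppose for contradiction that $T^* < \infty$; by continuity either $u(T^*) = 0$ or $v(T^*) = 0$.

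Subtracting the two systems one finds
$$\dot{v} = \rho\,u - b\,v, \qquad \dot{u} = -\tfrac{1}{2}(b+d)\,u + A\rho^2 + (t+1)^s a^2 + k\rho + a.$$
The central algebraic step, valid on $[0, T^*)$, is to rewrite the non-$u$ terms of $\dot{u}$ as a manifestly nonnegative combination of $\rho^2$ and $v$. Using $a^2 - \rho^2 = (a+\rho)\,v$, I would split
$$A\rho^2 + (t+1)^s a^2 = \bigl[A + (t+1)^s\bigr]\rho^2 + (t+1)^s (a + \rho)\,v,$$
where the first summand is $\geq 0$ by \eqref{poly_condi_22}. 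Likewise the affine term $k\rho + a$ equals $a + \rho > 0$ when $k = 1$ and equals $v$ when $k = -1$, so it is nonnegative in both cases. Altogether one obtains, on $[0, T^*)$,
$$\dot{u} \geq -\tfrac{1}{2}(b+d)\,u + c(t)\,v, \qquad \dot{v} = \rho\,u - b\,v,$$
for some nonnegative coefficient $c(t)$, with $\rho > 0$.

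The conclusion then follows from integrating factors. With $\Phi(t) := \exp\bigl(\tfrac{1}{2}\int_0^t (b+d)\,d\tau\bigr)$ and $\Psi(t) := \exp\bigl(\int_0^t b\, d\tau\bigr)$, both finite and positive on the compact interval $[0, T^*]$ since $b, d$ are smooth solutions there, one has $(\Phi u)' \geq \Phi\, c\, v \geq 0$ and $(\Psi v)' = \Psi\, \rho\, u \geq 0$ as long as $u, v \geq 0$. Hence $\Phi u$ and $\Psi v$ are nondecreasing on $[0, T^*)$, and passing to the limit $t \uparrow T^*$ yields
$$u(T^*) \geq \frac{u(0)}{\Phi(T^*)} > 0, \qquad v(T^*) \geq \frac{v(0)}{\Psi(T^*)} > 0,$$
contradicting the definition of $T^*$. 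Thus $T^* = \infty$, which is the claim.

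I expect the principal obstacle to be the algebraic rearrangement that simultaneously handles the attractive and repulsive cases in one stroke. One must lean on \eqref{poly_condi_22} to absorb the potentially large negative contribution $A\rho^2$ into the nonnegative factor $\bigl[A + (t+1)^s\bigr]\rho^2$, while also observing that $k\rho + a$ reduces to a nonnegative expression for both $k = \pm 1$ (directly for $k = 1$, and via the gap $v$ for $k = -1$). Once this unified nonnegativity structure is in place, the integrating-factor step is routine, and the preservation of positivity of $\rho$ and $a$ is immediate from the linearity of their evolution equations.
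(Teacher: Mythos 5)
Your proof is correct and rests on the same core algebraic decomposition as the paper's: splitting $A\rho^2+(t+1)^s a^2$ into $[A+(t+1)^s]\rho^2 + (t+1)^s(a^2-\rho^2)$ to exploit the lower bound on $A$ and the ordering $a>\rho$, and noting that $k\rho+a$ is nonnegative for both $k=\pm1$ precisely because $a>\rho>0$. The only difference is cosmetic — the paper runs a first-crossing contradiction in $b-d$ directly (using the explicit exponential formulas for $a$ and $\rho$ to pin down which inequality could fail first) rather than packaging the argument as a Gronwall/integrating-factor estimate for the gaps $u,v$.
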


\begin{proof}
Suppose $t_1$ is the earliest time when the above assertion is violated. Consider
\begin{equation}\label{comp_a_r}
a(t_1) = a(0) e^{-\int^{t_1} _ 0 b(\tau) \, d \tau} > \rho(0) e^{-\int^{t_1} _0 d(\tau) \, d \tau} = \rho (t_1).
\end{equation}
Therefore, it is left with only one possibility that $d(t_1) = b(t_1).$ Consider
\begin{equation}\label{diff_systems}
\dot{b}-\dot{d} =-\frac{1}{2} (b^2 -d^2) - (t+1)^s a^2 - A(t) \rho^2 - a - k\rho.
\end{equation}
Since $b(t)-d(t) <0$ for $t<t_1$ and $b(t_1) - d(t_1)=0$, hence at $t=t_1$, we have
$$\dot{b}(t_1)-\dot{d} (t_1) \geq 0.$$
But the right hand side of \eqref{diff_systems},  when it is evaluated at $t=t_1$, is negative. Indeed
\begin{equation*}
\begin{split}
&-\frac{1}{2} \big{(}b^2 (t_1) -d^2 (t_1) \big{)} - (t_1 +1)^s a^2 (t_1) - A(t_1) \rho^2 (t_1) - a(t_1) - k\rho(t_1)\\
&= - (t_1 +1)^s a^2 (t_1) - A(t_1) \rho^2 (t_1) - a(t_1) - k\rho(t_1)\\
&=(t_1 +1)^s \big{(}  -a^2 (t_1) + \rho^2 (t_1) \big{)} +\rho^2 (t_1) \big{(} -(t_1+1)^s -A(t_1)  \big{)} - a(t_1) - k\rho(t_1).
\end{split}
\end{equation*}
From \eqref{comp_a_r}, we see that $-a(t_1) + \rho (t_1) <0$. Also, $-a(t_1) - \rho(t_1) <0$ because of positivities of $a(0)$ and $\rho(0)$. Thus, $-a(t_1) -k\rho(t_1) <0$ for both $k=1$ and $k=-1$.
%We see that $-a(t_1) -k\rho(t_1) <0$ for both $k=1$ and $k=-1$. 
Also,  $-(t_1 +1)^s \leq A(t_1)$. Therefore, the right hand side of \eqref{diff_systems} at $t=t_1$ is negative, and this leads to the contradiction.
\end{proof}

\begin{lemma}\label{lemma_dbound}
 Consider \eqref{comp_epsystem} with either $k=1$ or $k=-1$. If there exists $\rho_M  >0$ such that $\rho(t) \leq \rho_M$, $\forall t \geq 0$, then $d(t)$ is bounded from above for all $d_0 \in \mathbb{R}$.
%as long as it remains finite on time interval $[0,t]$.
\end{lemma}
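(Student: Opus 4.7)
The plan is to exploit the Riccati-type structure of the $\dot d$ equation: the $-d^2/2$ term dominates for large $d$, while all other terms are uniformly bounded once $\rho$ is bounded. This gives an elementary comparison argument.

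First I would recall that from \eqref{A_eqn_2} we already have a uniform upper bound
\[
A(t) \leq \frac{1}{2}\Bigl(\frac{\omega_0}{\rho_0}\Bigr)^2 =: A_M
\]
for all $t\geq 0$. Combined with the hypothesis $\rho(t)\leq \rho_M$ and $|k|=1$, the forcing term in the $\dot d$ equation satisfies
\[
A(t)\rho(t)^2 + k\rho(t) \leq A_M \rho_M^2 + \rho_M =: C,
\]
for all $t\geq 0$, where $C$ is a finite constant depending only on the initial data and $\rho_M$ (and is independent of the sign of $k$).

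Next, substituting this bound into the first equation of \eqref{comp_epsystem} yields the differential inequality
\[
\dot d(t) \;\leq\; -\tfrac{1}{2} d(t)^2 + C, \qquad t\geq 0.
\]
Set $d^* := \max\bigl\{d_0,\sqrt{2C}\bigr\}$. I claim $d(t)\leq d^*$ for all $t\geq 0$. Indeed, if $d(t_0) = d^*$ for some first time $t_0 > 0$ (with $d\leq d^*$ on $[0,t_0]$), then at $t_0$ one has $d(t_0)^2 \geq 2C$, so the inequality above gives $\dot d(t_0) \leq 0$, contradicting the assumption that $d$ has just reached $d^*$ from below (a standard barrier argument, made rigorous by considering $d(t)-d^* - \varepsilon$ for $\varepsilon \downarrow 0$ or by invoking a comparison principle for scalar ODEs against the stationary supersolution $\sqrt{2C}$).

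Since the argument is entirely a comparison against the scalar ODE $\dot y = -y^2/2 + C$, whose solutions are bounded above by $\max\{y(0),\sqrt{2C}\}$, there is no real obstacle: the only point to double-check is that the bound $C$ is indeed valid for both signs $k=\pm 1$, which follows because $\rho(t)>0$ keeps $k\rho$ uniformly bounded on $[-\rho_M,\rho_M]$. This will complete the proof that $d(t)$ is bounded from above for every initial value $d_0\in\mathbb{R}$.
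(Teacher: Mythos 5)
Your proof is correct and follows essentially the same route as the paper: bound $A(t)$ above by $\tfrac12(\omega_0/\rho_0)^2$, use $0<\rho\leq\rho_M$ and $|k|=1$ to turn the Riccati equation into $\dot d\leq -d^2/2+C$, and conclude via comparison with the stationary supersolution $\sqrt{2C}$. The paper uses the slightly sharper constant $C=\max\{0,\,w\rho_M^2+k\rho_M\}$ in place of your $A_M\rho_M^2+\rho_M$, but this is cosmetic and both choices give the stated upper bound on $d(t)$.
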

\begin{proof}
Since $A(t)\leq   \frac{1}{2}( \frac{\omega_0}{\rho_0} )^2 =:w$, we have
\begin{equation*}
\begin{split}
\dot{d}&= -d^2 /2 + A(t)\rho^2 +k \rho \\
&\leq  -d^2 /2  +    w \rho^2 +k \rho \\
&\leq  -d^2 /2  + \max \{ 0,     w \rho^2 _M +k \rho_M  \}.
\end{split}
\end{equation*}
Thus,
$$d(t) \leq \max \big{\{} d_0, \sqrt{2\max\{ 0, w\rho^2 _M +k \rho_M \} }  \big{\}}.$$
\end{proof}

The last step of proving the theorem is to combine the comparison principle in Lemma \ref{lemma_comp} with Lemma \ref{lemma_invariant}.
Note that $\Omega$ defined in Lemma \ref{lemma_invariant} is  an open set and given any initial data $(\rho_0 , d_0) \in \Omega$ for system \ref{comp_epsystem}, we can find $\epsilon>0$ and initial data $(a_0 , b_0):=(\rho_0 +\epsilon , d_0 -\epsilon) \in \Omega$ for system \ref{comp_auxsystem}. Therefore, by lemmata  \ref{lemma_comp} and \ref{lemma_invariant},
$$0<\rho(t) < a(t) \leq a_0, \ \ and \ \ 0< d(t), \ \ \forall t\geq 0.$$
Thus, we see that $\rho(t)< a_0$ and this implies that $d(t)$ is bounded from above for all $t\geq 0$ due to  Lemma \ref{lemma_dbound}.
%In addition to this, by Lemma \ref{lemma_dbound}, $\rho(t)<a_0$ implies that $d(t)$ is bounded from above for all $t\geq 0$. 
This completes the proof.

$$$$
\section{Appendix}
In this appendix, as mentioned in the remark of the theorem, we discuss why the EP system
\begin{equation}
\left\{
  \begin{array}{ll}
    d' = -\frac{1}{2}d^2 + A(t)\rho^2 +k(\rho-c_b), \\
    \rho' = -\rho d,\\
  \end{array}
\right.
\end{equation}
is unable to attain global solutions when $A(t) \sim -e^{t}$ and $c_b =0$.

In \cite{Lee20}, global solutions for the EP system 
%\begin{equation}
%\left\{
 % \begin{array}{ll}
   % d' = -\frac{1}{2}d^2 + A(t)\rho^2 +k(\rho-c_b), \\
    %\rho' = -\rho d,\\
  %\end{array}
%\right.
%\end{equation}
with \emph{non-zero background} case (i.e., $c_b\neq 0$) were investigated under the assumption that $A(t)\geq -(\alpha t+\beta)^s, \ \ t\geq 0.$ Furthermore, similar results were obtained under the weaker assumption
\begin{equation}\label{exp_assump_11}
A(t) \geq -\alpha e^{\beta t}, \ \ t\geq 0,
\end{equation}
where $\alpha$, $\beta$ are positive constants. That is, the EP system can afford to have global solutions as long as the decay rate of $A(t)$ is not too high - an exponential rate.

Returning to the zero background case, one should not expect the similar results to hold under the same assumption in \eqref{exp_assump_11}. Indeed, consider the following auxiliary system
\begin{equation}\label{exp_system_11}
\left\{
  \begin{array}{ll}
    b' = -b^2/2 - e^t a^2 -a, \\
    a' = -b a.\\
  \end{array}
\right.
\end{equation}
In the following lemma, one can see that finite time blow up occurs for \emph{all} initial data, which contrasts to the existence of global solutions for the auxiliary system 
\begin{equation}\label{poly_system_223}
\left\{
  \begin{array}{ll}
    \dot{b} = -b^2 /2 -(t+1)^s a^2 - a , \\
    \dot{a} = -ba.\\
  \end{array}
\right.
\end{equation}
That is, in Lemma \ref{lemma_invariant}, we show that there exists some bounded solutions (actually converge to the equilibrium point-the origin) for the system \eqref{poly_system_223}. In contrast to this, we have the following:

\begin{figure}[h]  
\centering 
\begin{tikzpicture}
%[scale=1.2, transform shape]
\draw[ultra thick,->] (-0.4,0) -- (5,0) node[anchor=north west] {a};
\draw[ultra thick,->] (0,0) -- (0,2.5) node[anchor=south east] {b};
\draw[ultra thick,->] (0,0) -- (0,-1.5);

\draw [thick, red] (0,1) -- (5,1)  node[align=right, right]
{$b=1/2$};;

\draw [fill] (2.5, 2)  node[below]{$\Omega_T$};
\draw [fill] (2.5, 0.8)  node[below]{$\Omega_M$};
\draw [fill] (2.5, -0.5)  node[below]{$\Omega_B$};
%\draw [fill] (3,.0)  node[below]{$\frac{n}{m}$} circle [radius=.07];
%\draw [fill] (4.5,.0)  node[below]{$a^* $} circle [radius=.07];
 % \fill [gray, opacity=0.2, domain=0:2, variable=\x]
    %  (0, 0.01)
      %-- plot ({\x}, 3.9)
      %-- plot (0, 3.5)
      %-- plot (5.5, 3.5);
      %-- (3, 0.01)
      %-- cycle;
      \draw [ draw=gray, fill=gray, fill opacity=0.2]
       (0,0) -- (0,1-0.02) -- (5-0.1,1-0.02)  --  (5-0.1,0);
       \draw [ draw=gray, fill=gray, fill opacity=0.1]
       (0,1) -- (0,2.4) -- (5-0.1,2.4)  --  (5-0.1,1);
       \draw [ draw=gray, fill=yellow, fill opacity=0.2]
       (0,0) -- (0,-1.5+0.1) -- (5-0.1, -1.5+0.1)  --  (5-0.1,0);
\end{tikzpicture}
    \caption{Decomposition of $\mathbb{R}^+ \times \mathbb{R}$ in Lemma \ref{blowup_lemma}} \label{fig:M1}  
\end{figure}
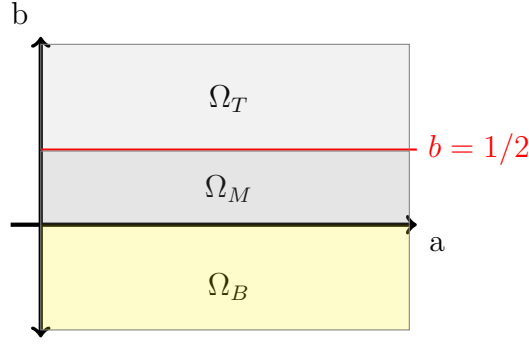

\begin{lemma}\label{blowup_lemma}
Consider \eqref{exp_system_11}. For every $(a_0 , b_0)\in (\mathbb{R}^+,  \mathbb{R})$, it holds that
$$a(t) \rightarrow \infty \ and \ b(t) \rightarrow -\infty,$$
in finite time.
\end{lemma}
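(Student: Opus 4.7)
The plan is to decompose $\mathbb{R}^+ \times \mathbb{R}$ as in the figure into the regions $\Omega_T := \{b > 1/2\}$, $\Omega_M := \{0 < b \leq 1/2\}$, and $\Omega_B := \{b < 0\}$, and to show that every trajectory reaches $\Omega_B$ in finite time, after which the Riccati character of the $b$-equation forces blow-up. Throughout the argument $a(t) > 0$ on the existence interval, because $a' = -ba$ and $a_0 > 0$.

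First I would dispose of $\Omega_T$: there $b' \leq -b^2/2 \leq -1/8$ since $-e^t a^2 - a < 0$, so $b$ decreases at rate at least $1/8$ and the trajectory reaches the level $b = 1/2$ at some finite time $t_1$ (take $t_1 = 0$ if $b_0 \leq 1/2$). The crux is handling $\Omega_M$. The naive bound $b' \leq -a$ does not obviously suffice, because $a$ could a priori decay fast, so I would introduce the auxiliary quantity $v := e^t a^2$, which satisfies
\begin{equation*}
v' = e^t a^2 + 2 e^t a \cdot (-ba) = v(1 - 2b).
\end{equation*}
Since $b \leq 1/2$ throughout $\Omega_M$, $v$ is non-decreasing there, so $v(t) \geq v(t_1) = e^{t_1} a(t_1)^2 > 0$ as long as the trajectory remains in $\Omega_M$. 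Feeding this back into the $b$-equation gives $b' \leq -v(t_1) < 0$, a linear-rate decrease that drives $b$ across $0$ at some finite time $t_2$.

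Finally, once in $\Omega_B$ the inequality $b' \leq -b^2/2$ with $b(t_2) < 0$ is a standard Riccati comparison and forces $b(t) \to -\infty$ at some finite $T \leq t_2 + 2/|b(t_2)|$. Using the explicit formula $a(t) = a(t_2) \exp\!\left( -\int_{t_2}^t b(\tau) \, d\tau \right)$, together with the fact that $-b(t) \geq 2/(T-t)$ near $T$ so that the exponent diverges to $+\infty$, one concludes $a(t) \to \infty$ simultaneously as $t \to T^-$.

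The main obstacle I anticipate is the middle step. The exponential growth of $e^t$ in the forcing must be balanced against the decay of $a$ governed by $a' = -ba$, and the identity $v' = v(1-2b)$ captures exactly the cancellation needed to get through the strip $\Omega_M$ in finite time. This is also precisely the feature that makes the $c_b = 0$ regime qualitatively different from the $c_b \neq 0$ regime treated in \cite{Lee20}, where the additional $+k c_b$ term in the $b$-equation can stabilize trajectories that would otherwise be forced across $\Omega_M$ into $\Omega_B$.
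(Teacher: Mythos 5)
Your decomposition into $\Omega_T,\Omega_M,\Omega_B$ and the logic $\Omega_T\to\Omega_M\to\Omega_B\to$ Riccati blow-up match the paper's proof; your auxiliary quantity $v=e^ta^2$ with $v'=v(1-2b)\geq0$ on $\{b\leq1/2\}$ is a cleaner repackaging of the paper's observation that $b(t)\leq b_0<1/2$ forces $a(t)\geq a_0e^{-b_0t}$, hence $e^ta^2\geq a_0^2e^{(1-2b_0)t}$ — same idea, slightly slicker. Where you go beyond the paper is the final claim $a(t)\to\infty$: the paper asserts it in the lemma but only proves $b\to-\infty$. Your attempt here has a genuine gap. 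The one-sided Riccati inequality $b'\leq-b^2/2$ gives, via the comparison solution, the lower bound $-b(t)\geq 2/(T_*-t)$ where $T_*=t_2+2/|b(t_2)|$ is the blow-up time of the \emph{comparison} solution; but the actual blow-up time $T$ only satisfies $T\leq T_*$ (and one expects $T<T_*$ strictly, since the extra forcing $-e^ta^2-a$ accelerates the collapse). So the bound you actually have is $-b(t)\geq 2/(T_*-t)$, not $-b(t)\geq 2/(T-t)$, and $\int_{t_2}^T\frac{2}{T_*-\tau}\,d\tau$ is \emph{finite} whenever $T<T_*$. The inequality as you wrote it is the reverse of what the comparison gives near $T$.

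The conclusion is nonetheless correct, but the argument must run by contradiction. Suppose $\int_{t_2}^T(-b)\,d\tau<\infty$. Then $a(t)=a(t_2)\exp\bigl(\int_{t_2}^t(-b)\,d\tau\bigr)$ is bounded on $[t_2,T)$, hence the forcing $G(t)=e^ta^2+a$ is bounded there by some $C$ (recall $T<\infty$). For $t$ close to $T$ one has $|b|$ large, so $-b^2\leq -b^2/2-C\leq b'\leq -b^2/2$; this two-sided pinching yields $-b(t)\asymp 1/(T-t)$ and therefore $\int_{t_2}^T(-b)\,d\tau=\infty$, a contradiction. Hence $\int_{t_2}^T(-b)\,d\tau=\infty$ and $a(t)\to\infty$ (indeed $a$ is increasing on $\Omega_B$ since $a'=-ba>0$ there, so divergence of the integral gives $a\to\infty$, not merely $\limsup=\infty$). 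With this repair the whole proof is sound; I'd also suggest double-checking the boundary conventions ($b=1/2$ and $b=0$) so the three regions partition $\mathbb{R}^+\times\mathbb{R}$ without overlap or gaps, though this is cosmetic since $b$ is strictly decreasing.
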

\begin{proof}
We first decompose  $(\mathbb{R}^+,  \mathbb{R})$ into three subsets. See Figure \ref{fig:M1}. Let $\Omega_B = \{(a,b) | a>0, \ b < 0  \}$, $\Omega_M = \{(a,b) | a>0, \   0 \leq b < 1/2  \}$ and $\Omega_T = \{(a,b) | a>0, \   b\geq 1/2  \}$.
Since $a(t)=a_0 e^{-\int^t _0 b(\tau) \, \tau} >0$, from the first equation of \eqref{exp_system_11}, we see that $b$ is strictly decreasing.

Let $(a_0 , b_0)\in \Omega_B$. That is, $b_0 <0$. Integrating $b' = -\frac{1}{2}b^2 - e^t a^2 -a  \leq -\frac{1}{2}b^2$ gives
\begin{equation}\label{b_ineq}
b(t) \leq \frac{2}{t+2b^{-1} _0},
\end{equation}
which in turn implies $b(t) \rightarrow -\infty$, as $t\rightarrow -2/b_0.$

Next, we show that if $(a_0 , b_0)\in \Omega_M$, then $(a(t), b(t)) \in \Omega_B$ in finite time. Since $b(t) \leq b_0$ for all $t$, as long as it exists, we have
$$a(t) = a_0 e^{-\int^t _0 b(\tau) \, d \tau} \geq a_0 e^{-\int^t _0 b_0 \, d \tau}=a_0 e^{-b_0 t}.$$
This inequality gives
$$b' \leq -\frac{1}{2}b^2 - e^t a^2 _0 e^{-2b_0 t} -a \leq -a^2 _0 e^{(1-2b_0)t}.$$
Since $1-2b_0 >0$, we see that $b(t)<0$ in finite time. 

Finally, it is easy to see that  $(a_0 , b_0)\in \Omega_T$ implies  $(a(t) , b(t))\in \Omega_M$ in finite time. Indeed, using \eqref{b_ineq} and $b_0 \geq 1/2$, we see that there exists $t^* < \infty$ such that $b(t^*)<\frac{1}{2}$.
\end{proof}

\bibliographystyle{abbrv}

\end{document}